\newtheorem{theorem}{Theorem}[section]
\newtheorem{lemma}[theorem]{Lemma}
\newtheorem{ex}[theorem]{Example}
\theoremstyle{definition}
\theoremstyle{remark} \theoremstyle{remark}
\newcommand{\di}{\mbox{d}}
\newcommand{\summ}{\sum_{\alpha=1}^{s}}
\newcommand{\etaa}{\eta_{\alpha}}
\newcommand{\xib}{\xi_{\beta}}
\newcommand{\xia}{\xi_{\alpha}}
\newcommand{\xit}{\overline{\xi}}
\newcommand{\etat}{\overline{\eta}}
\def\a{\alpha}
\def\e{\eta}
\def\s{\sum}
\def\n{\nabla}
\numberwithin{equation}{section}
\begin{document}

\title[$f$-$(\kappa,\mu)$ manifolds]{Metric $f$-contact manifolds satisfying the $(\kappa,\mu)$-nullity condition}

\author[A. Carriazo]{Alfonso Carriazo}
 \address{Departamento de Geometr\'{i}a y Topolog\'{i}a, c/ Tarfia s/n, Universidad de Sevilla, Sevilla 41012, Spain}
 \email{carriazo@us.es}

\author[L. M. Fern\'andez]{Luis M. Fern\'andez}
 \address{Departamento de Geometr\'{i}a y Topolog\'{i}a, c/ Tarfia s/n, Universidad de Sevilla, Sevilla 41012, Spain}
 \email{lmfer@us.es}

\author[E. Loiudice]{Eugenia Loiudice}
  \address{Philipps Universit\"at Mar\-burg, Fach\-bere\-ich Math\-e\-matik und In\-for\-matik, Hans-Meerwein-Straße, 35032 Marburg, Germany}
 \email{loiudice@mathematik.uni-marburg.de}


\begin{abstract}
We prove that if the $f$-sectional curvature at any point $p$ of a $(2n+s)$-dimensional $f$-$(\kappa,\mu)$ manifold with $n>1$ is independent of the $f$-section at $p$, then it is constant on the manifold. Moreover, we also prove that an $f$-$(\kappa,\mu)$ manifold which is not an $S$-manifold is of constant $f$-sectional curvature if and only if $\mu=\kappa+1$ and we give an explicit expression for the curvature tensor field. Finally, we present some examples.
\end{abstract}
\subjclass[2010]{53C15, 53C25, 53C40}
\keywords{Metric $f$-contact manifold, $f$-$(\kappa,\mu)$ manifold, $f$-$(\kappa,\mu)$-space form.}

\maketitle

\section{Introduction.}

Riemannian manifolds with a complementary structure adapted to the metric have been widely studied, for instance, almost complex and almost contact manifolds. Both almost complex and almost contact structures are particular cases of $f$-structures introduced by K. Yano \cite{Y}.

A $(2n+s)$-dimensional Riemannian manifold $(M,g)$ endowed with an $f$-structure $f$ (that is, a tensor field of type (1,1) and rank $2n$ satisfying $f^3+f=0$ \cite{Y}) is said to be a {\it metric $f$-manifold} if, moreover, there exist $s$ global vector fields $\xi_1,\dots ,\xi_s$ on $M$ (called {\it structure vector fields}) and $s$ 1-foms $\eta_1,\dots ,\eta_s$ such that,
\begin{equation}\label{eta xi}
\etaa(\xib)=\delta_{\alpha}^{\beta}, \;f\xi_\alpha=0;\mbox{ }\e_\a\circ f=0;
\end{equation}
\begin{equation}\label{f^2}f^2=-I+\s_{\a=1}^s\e_\a\otimes\xi_\a;
\end{equation}
\begin{equation}\label{g(f,f)}
g(fX,fY)=g(X,Y)-\sum \etaa(X)\etaa(Y),
\end{equation}
for any $X,Y\in \mathcal{X}(M)$ and $\alpha,\beta=1,\dots,s$. Let $F$ be the 2-form on $M$ defined by $F(X,Y)=g(X,fY)$, for any $X,Y\in\mathcal{X}(M)$. Then, a metric $f$-manifold is said to be a {\it metric $f$-contact manifold} or an {\it almost $S$-manifold} (see \cite{CFF,DIP}) if $F=\di\e_\alpha$, for any $\alpha=1,\dots,s$ and a normal metric $f$-contact manifold is said to be an {\it S-manifold} (see \cite{B} for more details and examples).

The study of metric $f$-manifolds was initiated by D.E. Blair, S.I. Goldberg, K. Yano and J. Vanzura \cite{B,GY,V}. Later, J.L. Cabrerizo, L.M. Fern\'andez and M. Fern\'andez  \cite{CFF} studied metric $f$-contact manifolds as well as K. Duggal, S. Ianus and A.M. Pastore \cite{DIP} who called them almost $S$-manifolds. For manifolds with an $f$-structure, D.E. Blair \cite{B} introduced the notion of $K$-manifolds and their particular cases of $S$-manifolds and $C$-manifolds and proved that the space of a principal toroidal bundle over a Kaehler manifold is an $S$-manifold.

Moreover, $S$-structures are a natural generalization of Sasakian structures. However, unlike Sasakian manifolds, no $S$-structure can be realized on a simply connected compact manifold \cite{DL} (see also \cite[Corollary 4.3]{GL}). In \cite{TK}, an example of an even dimensional principal toroidal bundle over a Kaehler manifold which does not carry any Sasakian structure is presented and an $S$-structure on the even dimensional manifold $U(2)$ is constructed. Consequently and since it is well known that $U(2)$ does not admit a Kaehler structure, there exist manifolds such that the best structure which one can hope to obtain on them is an $S$-structure. In this context, it seems to be necessary to generalize to the setting of metric $f$-manifolds the concepts and results concerning almost contact geometry.

Following this idea, B. Capelletti Montano and L. Di Terlizzi generalize the concept of contact metric manifolds such that the characteristic vector field belongs to the $(\kappa,\mu)$-distribution, being $\kappa$ and $\mu$ real constants, studied in \cite{BKP} and classified in \cite{BO}, to metric $f$-contact manifolds (see \cite{CT}) by defining $f$-$(\kappa,\mu)$ manifolds. The purpose of this paper is to find conditions which characterizes $f$-$(\kappa,\mu)$ manifolds with constant $f$-sectional curvature.  We shall prove that if the $f$-sectional curvature at a point $p$ of a $(2n+s)$-dimensional $f$-$(\kappa,\mu)$ manifold with $n>1$ is independent of the $f$-section al $p$, then it is constant on the manifold. This result is analogous to Schur's theorem and extends a corresponding result on $S$-manifolds. Moreover, we shall also prove that an $f$-$(\kappa,\mu)$ manifold which is not an $S$-manifold is of constant $f$-sectional curvature if and only if $\mu=\kappa+1$ and an explicit expression for the curvature tensor field will be given.

These results generalize the corresponding ones in the case $s=1$, that is, in contact geometry \cite{K}.

Finally, we shall present some examples as application of the above results.

\section{Basic Definitions and Results.}

Let $(M,f,\xi_1,\dots,\xi_s,\eta_1,\dots,\eta_s,g)$ a metric $f$-manifold. The distribution on $M$ spanned by the structure vector fields is denoted by $\mathcal{M}$ and its complementary orthogonal distribution is denoted by $\mathcal{L}$. Consequently, $TM=\mathcal{L}\oplus\mathcal{M}$. Moreover, if $X\in\mathcal{L}$, then $\e_\a(X)=0$, for any $\a=1,\dots ,s$ and if $X\in\mathcal{M}$, then $fX=0$. From now on, we set $\overline{\xi}=\xi_1+\cdots+\xi_s$ and $\overline{\eta}=\eta_1+\cdots+\eta_s$.

Since $f$ is of rank $2n$, then $\eta_1\wedge\cdots\wedge\eta_s\wedge F^n\neq 0$ and, in particular, $M$ is orientable. A metric $f$-contact manifold is said to be a {\it metric $f$-$K$-contact manifold} if the structure vector fields are Killing vector fields.

The $f$-structure $f$ is said to be {\it normal} if
$$[f,f]+2\sum_{\alpha=1}^s\xi\alpha\otimes\di\eta_\alpha=0,$$
where $[f,f]$ denotes the Nijenhuis tensor of $f$.

On a metric $f$-contact manifold there are defined the $(1,1)$-tensor fields
$$h_\alpha=\frac{1}{2}L_{\xi_\alpha}f,\,\alpha=1,\dots,s,$$
(see \cite{CFF}), where $L_{\xi_\alpha}f$ is the Lie derivative of $f$ in the direction $\xi_\alpha$. These operators are self adjoint, traceless, anticommute with $f$ and
\begin{equation}\label{h}
h_\alpha\xi_\beta=0, \, \eta_\alpha\circ h_\beta=0,\,\alpha,\beta=1,\dots,s.
\end{equation}

Moreover, the structure vector field $\xi_a$ is a Killing vector field if and only if $h_\alpha=0$ \cite[Theorem 2.6]{CFF}.

Given $\kappa,\mu$ two real constants, a metric $f$-contact manifold is said to verify the {\it $(\kappa,\mu)$-nullity condition} (see \cite{CT}) or to be a {\it metric  $f$-$(\kappa,\mu)$ manifold}, if for each $\alpha\in\{1,\dots, s\}$ and $X,Y\in\mathcal{X}(M)$,
\begin{equation}\label{16}
R(X,Y)\xi_\alpha=\kappa\Big(\overline{\eta}(X)f^2Y-\overline{\eta}(Y)f^2X\Big)+\mu\Big(\overline{\eta}(Y)h_\alpha X-\overline{\eta}(Y)h_\alpha Y\Big),
\end{equation}
where $R$ is denoting the curvature tensor field of $g$. B. Cappelletti Montano and L. Di Terlizzi \cite{CT} proved that, in this context, $\kappa\leq 1$ and, if $\kappa<1$, any $h_\alpha$ has eigenvalues $0,\pm\sqrt{1-\kappa}$ and $h_1=\cdots=h_s$. Putting $h$ as this common value and by using the symmetry properties of $R$ and the symmetry of $f^2$, (\ref{16}) becomes
\begin{equation}\label{1.12}
 R(\xia,X)Y=\kappa\Big(\etat(Y)f^2 X-g(X,f^2 Y)\xit\Big)+\mu\Big(g(X,hY)\xit-\etat(Y)hX\Big),
\end{equation}
for any $\alpha=1,\dots,s$.

Moreover, in this case of being $\kappa<1$, denoting by $\mathcal{L}_+$ and $\mathcal{L}_-$ the $n$-dimensonal distributions of the eigenspaces of $\lambda=\sqrt{1-\kappa}$ and $-\lambda$, respectively, it is known (cf. \cite{CT}) that $\mathcal{L}_+$ and $\mathcal{L}_-$ are mutually ortogonal and, since $f$ anticommutes with $h$, $f(\mathcal{L}_+)=\mathcal{L}_-$ and $f(\mathcal{L}_-)=\mathcal{L}_+$.

In the same paper \cite{CT}, it is also proved that a metric $f$-contact manifold is an $S$-manifold if and only if $\kappa=1$.

For later use, we recall that, from Lemma 2.7 of \cite{CT}, in a metric $f$-contact manifold satisfying the $(\kappa,\mu)$-nullity condition,
\begin{equation}\label{Rf}
\begin{aligned}
    R(X,Y)fZ ={}&f R(X,Y)Z +\Big(\kappa\big(\overline{\eta}(Y)g(fX,Z)-\overline{\eta}(X)g(fY,Z)\big)\\
              &+\mu \big(\overline{\eta}(Y)g(f h X,Z)-\overline{\eta}(X)g(f h Y,Z)\big)\Big)\overline{\xi} \\
              & + s \Big( -g(h Y-f^2Y,Z)(fX+ fhX)+g(h X-f^2X,Z)(fY+fh Y)\\
                        &-g(fY+fh Y,Z)(h X-f^2X)+g(fX+fh X,Z)(h Y-f^2Y)\Big)\\
                        & +\overline{\eta}(Z) \Big( \kappa\big(\overline{\eta}(X)fY-\overline{\eta}(Y)fX\big) +\mu\big(\overline{\eta}(X)fh Y-\overline{\eta}(Y)fh X\big)\Big),
\end{aligned}
\end{equation}
for any $X,Y,Z\in\mathcal{X}(M)$. Moreover, in the same conditions, if $Q$ denotes the Ricci operator and $\kappa<1$, from Corollary 2.1 of \cite{CT}:
\begin{equation}\label{Ricci}
 Q=s\Big(2(1-n)+n\mu\Big)f^2+s\Big(2(n-1)+\mu\Big)h+2n\kappa\overline{\eta}\otimes\overline{\xi}.
\end{equation}

\section{Main Theorems.}

With a technique similar to the one used by T. Koufogiorgos in \cite[Theorem 4.1]{K} (in case of metric contact $(\kappa,\mu)$-spaces, that is, in case of $s=1$), we prove the following theorem, which generalizes both \cite[Theorem 4.1]{K} and \cite[Proposition 1.5]{Kobayashi-Tsuchiya} (see also \cite[Theorem 2.1]{Fernandez}).
\begin{theorem}
Let $(M,f,\xi_1,\dots,\xi_s,\eta_1,\dots,\eta_s,g)$ be a $(2n+s)$-dimensional metric $f$-$(\kappa,\mu)$ manifold, with $n>1$. If for every point $p\in M$ the $f$-sectional curvature at $p$ is constant, then it is constant on $M$ (so, $M$ is called a {\it $f$-$(\kappa,\mu)$-space-form}) and the curvature tensor of $M$ is given by
\begin{equation}\label{f}
 \begin{aligned}
  4R(X,Y)Z ={} &(H+3s)\Big(g(f^2Y,Z)f^2X-g(f^2X,Z)f^2Y \Big)+(H-s)\Big(2g(f Y,X)f Z\\
             &+g(X,f Z)f Y -g(Y,f Z)f X \Big)-2s \Big(g(hX, Z)h Y -g(hY, Z)h X  \\
             &-g(f hX, Z)f h Y +g(f hY, Z)f h X -2g(f^2X,Z)hY+2g(f^2Y,Z)hX \\
             &-2g(hX, Z)f^2 Y + 2g(hY, Z)f^2 X\Big) +4 \kappa \Big( \etat (X) \etat (Z)f ^2 Y\\
             &-  \etat (X)g(Y,f ^2 Z)\xit - \etat (Y) \etat (Z)f ^2 X+ \etat (Y) g(X,f ^2 Z)\xit \Big)\\
             & +4 \mu \Big( \etat (Y) \etat (Z)hX-  \etat (Y)g(X,h Z)\xit - \etat (X) \etat (Z)hY+ \etat (X) g(Y,h Z)\xit  \Big),
 \end{aligned}
 \end{equation}
where $H$ is the constant $f$-sectional curvature of $M$. In particular if $\kappa <1$, then $\mu=\kappa+1$ and $H=-s(2\kappa+1)$.
\end{theorem}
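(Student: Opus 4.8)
The plan is to determine the curvature tensor pointwise from the hypothesis, then to upgrade pointwise constancy of the $f$-sectional curvature to global constancy by a Schur-type argument, and finally to extract the relation between the parameters when $\kappa<1$. Write $H(X)=g(R(X,fX)fX,X)$ for the $f$-sectional curvature of the $f$-section $\mathrm{span}\{X,fX\}$ attached to a unit vector $X\in\mathcal{L}$; by hypothesis $H(X)=H(p)$ is independent of $X$ at each $p$. First I would polarize the quartic map $X\mapsto g(R(X,fX)fX,X)$, treating $R|_{\mathcal{L}}$ as an algebraic curvature tensor. Exactly as in the computation of the curvature of a space of constant holomorphic sectional curvature, the symmetric polarization together with the first Bianchi identity determines the \emph{$f$-holomorphic part} of $R$ on $\mathcal{L}$, and this yields precisely the $(H+3s)$-term and the $(H-s)$-term of \eqref{f}.

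The difference from the Kähler situation is that $f$ does not commute with $R$: the defect is recorded exactly by \eqref{Rf}. So after isolating the holomorphic part I would substitute \eqref{Rf} to pin down the remaining, $h$-dependent components of $R$ on $\mathcal{L}$ (the $-2s$ bracket in \eqref{f}), and use the nullity identities \eqref{16}--\eqref{1.12} to fix the components in the $\overline{\xi}$-direction (the $\kappa$- and $\mu$-terms). This assembles \eqref{f} with the constant $H$ replaced by the function $H(p)$. I expect this step, namely feeding the non-Kähler defect \eqref{Rf} into the polarization while keeping careful track of the $s$-dependent $h$-terms, to be the main technical obstacle; it is the part that most directly generalizes Koufogiorgos's argument from the case $s=1$.

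To promote $H(p)$ to a genuine constant I would differentiate \eqref{f} covariantly and substitute into the second Bianchi identity. Using the structure equations of a metric $f$-$(\kappa,\mu)$ manifold (the expressions for $\nabla f$, $\nabla h$ and $\nabla\xi_\alpha$) and a suitable contraction, every term should cancel except one proportional to $dH$, whose tensorial coefficient is nonzero precisely because $n>1$; this is where the dimensional hypothesis enters, in analogy with the failure of the classical Schur theorem for $2$-dimensional sections. Hence $dH=0$ and $H$ is constant on $M$.

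For the final assertion, suppose $\kappa<1$, so $h\neq 0$. Contracting \eqref{f} over an orthonormal frame adapted to $\mathcal{L}\oplus\mathcal{M}$ gives the Ricci operator, and comparing with \eqref{Ricci} the coefficient of $g$ on $\mathcal{L}$ yields $H(n+1)=s(n-1-2n\mu-2\kappa)$. Since \eqref{f} is a genuine curvature tensor of constant $f$-sectional curvature for \emph{every} value of $\mu$, the relation $\mu=\kappa+1$ cannot be purely algebraic and must come from the differential identity: because $h\neq 0$, the terms carrying $\nabla h$ in the second Bianchi computation above no longer vanish and deliver a second, independent relation among $H$, $\kappa$ and $\mu$, which together with the Ricci relation forces $\mu=\kappa+1$; substituting back gives $H=-s(2\kappa+1)$. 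When $\kappa=1$ the manifold is an $S$-manifold and $h=0$, so these terms are absent and no constraint on $\mu$ appears, consistent with the statement.
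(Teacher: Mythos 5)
The first half of your proposal coincides in substance with the paper's proof: the authors also polarize the quartic $X\mapsto g(R(X,fX)fX,X)$ on $\mathcal{L}_p$, feed in the non-K\"ahler defect \eqref{Rf} to track the $h$-terms, use the first Bianchi identity, and extend from $\mathcal{L}_p$ to $T_pM$ via \eqref{16} and \eqref{1.12}, exactly along Koufogiorgos's lines. But your Schur-type step is both unexecuted and unnecessary. Once you contract \eqref{f} over a frame and compare with \eqref{Ricci} you obtain precisely the relation you wrote down, $(n+1)H(p)=s(n-1-2n\mu-2\kappa)$, which is \eqref{H} in the paper; since $\kappa$ and $\mu$ are \emph{constants} by the very definition of the $(\kappa,\mu)$-nullity condition, this identity alone shows $H$ is constant --- no covariant differentiation or second Bianchi identity is needed, and you in fact derived this relation without noticing it already finishes the constancy claim. (Two caveats: \eqref{Ricci} requires $\kappa<1$, and the case $\kappa=1$ is the known $S$-manifold result, which the paper handles by citation; and $n>1$ does not enter through any Bianchi cancellation --- constancy holds for all $n$, and the dimensional hypothesis is used only at the very last step.)

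The genuine gap is your treatment of $\mu=\kappa+1$. Your meta-claim that this relation ``cannot be purely algebraic and must come from the differential identity'' is false, and the proposed mechanism --- $\nabla h$-terms in the second Bianchi computation delivering ``a second, independent relation'' --- is pure speculation with no computation behind it; nothing in your argument actually produces such a relation. The paper's second relation is algebraic and comes from the $(\kappa,\mu)$-structure itself: by Theorem 2.3 of \cite{CT} (equivalently, the paper's Lemma \eqref{f-sect} applied with $X=X_+$), every unit $X\in\mathcal{L}_+$ satisfies $K(X,fX)=-s(\kappa+\mu)$. Since $H$ is constant, $H=-s(\kappa+\mu)$, and substituting into \eqref{H} gives $-(n+1)(\kappa+\mu)=n-1-2\mu n-2\kappa$, i.e. $(\kappa-\mu+1)(n-1)=0$, whence $\mu=\kappa+1$ and $H=-s(2\kappa+1)$ for $n>1$. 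The ingredient you are missing is that on a $(\kappa,\mu)$-manifold with $\kappa<1$ the operator $h$ has eigenvalues $\pm\sqrt{1-\kappa}$ and the $f$-sectional curvature of sections tangent to the eigendistribution $\mathcal{L}_+$ is already pinned down by $\kappa$ and $\mu$, independently of formula \eqref{f}; it is the compatibility of this forced value with the constant $H$ that yields $\mu=\kappa+1$. Without this (or an equivalent substitute), your proposal cannot close the final assertion.
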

\begin{proof}
For $\kappa =1$, that is, when $M$ is an $S$-manifold, the theorem is known (see \cite[Proposition 1.5]{Kobayashi-Tsuchiya} and \cite[Theorem 2.1]{Fernandez}). Thus, suppose $\kappa <1$.

Let $p\in M$ and $X,Y\in \mathcal{L}_p$. Then, using basic curvature identities, equations \eqref{h}, \eqref{Rf} and the fact that $M$ a is metric $f$-contact manifold, we obtain:
 \begin{equation}\label{g1}
 \begin{aligned}
   g(R(X,fX)Y,fY)={}& -g(R(fX,Y)X,fY)-g(R(Y,X)fX,fY)\\
                 ={}& g(R(X,fY)Y,fX)+g(fR(X,Y)X,fY)\\
                  &+s\Big(g(fh X,X)g(h Y,fY)-g(fY+fh Y,X)g(h X+X,fY)\\
                  &-g(h Y+Y,X)g(fX+ fhX,fY)\\
                  &+g(h X+X,X)g(fY+fh Y,fY)\Big)\\
                 ={}& g(R(X,fY)Y,fX)+g(R(X,Y)X,Y)+s\Big(-g(fh X,X)g(fh Y,Y)\\
                 &-g(fX,Y)^2+g(fhX,Y)^2 -2g(X,Y)g(hX,Y)-g(hX,Y)^2\\
                 &-g(X,Y)^2+g(X,hX)g(Y,Y)+g(X,hX)g(Y,hY)\\
                 &+g(X,X)g(Y,Y)+g(X,X)g(hY,Y)\Big).
 \end{aligned}
 \end{equation}

Analogously we have
\begin{equation}\label{g2}
 \begin{aligned}
   g(R(X,fY)X,fY)={}& g(R(X,fY)Y,fX)+s\Big(g(X,Y)^2-g(hX,Y)^2\\
                  &-g(fh X,X)g(fh Y,Y)-g(X,X)g(Y,Y)-g(X,hX)g(Y,Y)\\
                  &+g(X,X)g(hY,Y)+g(X,hX)g(Y,hY)+g(fX,Y)^2\\
                  &+g(fhX,Y)^2+2g(fX,Y)g(fhX,Y)\Big),
 \end{aligned}
 \end{equation}
\begin{equation}\label{g3}
 \begin{aligned}
   g(R(Y,fX)Y,fX)={}& g(R(X,fY)Y,fX)+s\Big(g(X,Y)^2-g(hX,Y)^2-g(hX,Y)^2\\
                  &-g(fh X,X)g(fh Y,Y)+g(fX,Y)^2+g(fhX,Y)^2\\
                  &-g(X,X)g(Y,Y)-2g(fX,Y)g(fhX,Y)-g(X,X)g(hY,Y)\\
                  &+g(X,hX)g(Y,Y)+g(X,hX)g(Y,hY)\Big)
 \end{aligned}
 \end{equation}
and:
\begin{equation}\label{g4}
 \begin{aligned}
   g(R(X,Y)fX,fY)={}& g(R(X,Y)X,Y)+s\Big(-g(X,Y)^2-g(hX,Y)^2-2g(X,Y)g(hX,Y)\\
                 &+g(X,X)g(Y,Y)+g(X,X)g(hY,Y)+g(X,hX)g(Y,Y)\\
                 &+g(X,hX)g(Y,hY)+g(fX,Y)^2+g(fhX,Y)^2\\
                 &-g(fh X,X)g(fh Y,Y)\Big).
 \end{aligned}
 \end{equation}

If $H(p)$ denotes the value of the $f$-sectional curvature at $p$, then for any $X,Y \in \mathcal{L}_p$:
\begin{equation*}
\begin{aligned}
 &g(R(X+Y,fX+fY)(X+Y),fX+fY)=-H(p)g(X+Y,X+Y)^2,\\
 &g(R(X-Y,fX-fY)(X-Y),fX-fY)=-H(p)g(X-Y,X-Y)^2.
\end{aligned}
 \end{equation*}

Summing these equations and using \eqref{g1}, \eqref{g2}, \eqref{g3}, we get:
\begin{equation}\label{H1}
\begin{aligned}
  -H(p)\Big(2g(X,Y)^2+g(X,X)g(Y,Y)\Big)={}& 2g(R(X,fX)Y,fY)+g(R(X,fY)X,fY)\\
                                        &+2g(R(X,fY)Y,fX)+g(R(Y,fX)Y,fX)\\
                                       ={}&3g(R(X,fY)Y,fX)+g(R(X,Y)X,Y)\\
                                        &+s\Big(-2g(hX,Y)^2-2g(X,Y)g(hX,Y)\\
                                        &+g(X,X)g(hY,Y)+g(Y,Y)g(hX,X)\\
                                        &+2g(hX,X)g(hY,Y)+2g(fhX,Y)^2\\
                                        &-2g(fhX,X)g(fhY,Y)\Big).
 \end{aligned}
 \end{equation}

Now we replace $Y$ by $fY$ in the previous equation, obtaining
 \begin{equation}\label{H2}
\begin{aligned}
  -H(p)\Big(2g(X,fY)^2+g(X,X)g(Y,Y)\Big)={}&-3g(R(X,Y)fY,fX)+g(R(X,fY)X,fY)\\
                                        &+s\Big(-2g(fhX,Y)^2+2g(X,fY)g(fhX,Y)\\
                                        &-g(X,X)g(hY,Y)+g(Y,Y)g(hX,X)\\
                                        &-2g(hX,X)g(hY,Y)+2g(hX,Y)^2\\
                                        &+2g(fhX,X)g(fhY,Y)\Big)\\
                                        ={}&g(R(X,fY)Y,fX)+3g(R(X,Y)X,Y)+\\
                                        &+s\Big(-2g(X,Y)^2-2g(hX,Y)^2\\
                                        &-2g(fhX,X)g(fhY,Y)+2g(X,X)g(Y,Y)\\
                                        &+3g(hX,X)g(Y,Y)+3g(X,X)g(hY,Y)\\
                                        &+2g(hX,X)g(hY,Y)-2g(fX,Y)^2\\
                                        &+2g(fhX,Y)^2-6g(X,Y)g(hX,Y)\Big),
 \end{aligned}
 \end{equation}
where we have used \eqref{g(f,f)} and \eqref{h} for the first equality and \eqref{g2} and \eqref{g4} for the second. Combining \eqref{H1} and \eqref{H2}, we deduce:
  \begin{equation*}
 \begin{aligned}
  4 g(R(X,Y)Y,X)={}& (H(p)+3s)\Big(g(X,X)g(Y,Y)-g(X,Y)^2\Big)+3(H(p)-s)g(X,fY)^2\\
                 &-2s\Big(g(hX,Y)^2+4g(X,Y)g(hX,Y)-2g(X,X)g(hY,Y)\\
                 &-2g(Y,Y)g(hX,X)-g(hX,X)g(hY,Y)-g(fhX,Y)^2\\
                 &+g(fhX,Y)g(fhY,Y)\Big).
 \end{aligned}
 \end{equation*}

Now, using this identity in
 \begin{equation*}
  g(R(X+Z,Y)Y,X+Z)=g(R(X,Y)Y,X)+g(R(Z,Y)Y,Z)+2g(R(X,Y)Y,Z),
 \end{equation*}
where $X,Y,Z\in \mathcal{L}_p$, we obtain, by a straightforward calculation:
\begin{equation*}
 \begin{aligned}
  4 g(R(X,Y)Y,Z)={}& (H(p)+3s)\Big(g(X,Z)g(Y,Y)-g(X,Y)g(Z,Y)\Big)\\
                 &+3(H(p)-s)g(X,fY)g(Z,fY)-2s\Big(g(hX,Y)g(hZ,Y)\\
                 &+2g(X,Y)g(hZ,Y)+2g(Z,Y)g(hX,Y)-2g(X,Z)g(hY,Y)\\
                 &-2g(Y,Y)g(hZ,X)-g(hX,Z)g(hY,Y)-g(fhX,Y)g(fhZ,Y)\\
                 &+g(fhX,Z)g(fhY,Y)\Big).
 \end{aligned}
 \end{equation*}

It is easy to check, by using the (\ref{16}), that the previous equation also holds for any $Z\in T_pM$ and $X,Y\in \mathcal{L}_p$. Thus, for each $X,Y\in \mathcal{L}_p$:
  \begin{equation*}
 \begin{aligned}
  4R(X,Y)Y ={} &(H(p)+3s)\Big(g(Y,Y)X-g(X,Y)Y \Big)+3(H(p)-s)g(f Y,X)f Y\\
             &-2s \Big(g(hX, Y)h Y +2g(X,Y)h Y  +2g(hX, Y)Y -2g( hY, Y) X \\
             &-2g(Y,Y)hX-g(hY,Y)hX -g(fhX, Y)fh Y + g(fhY, Y)fh X\Big).
 \end{aligned}
 \end{equation*}

Next, let $X,Y,Z\in \mathcal{L}_p$. Then, using the above identity in
  \begin{equation*}
   R(X,Y+Z)(Y+Z)=R(X,Y)Y+R(X,Y)Z+R(X,Z)Z+R(X,Z)Y,
  \end{equation*}
we obtain:
\begin{equation}\label{4R}
 \begin{aligned}
  4\Big(R(X,Y)Z+R(X,Z)Y\Big) ={} &(H(p)+3s)\Big(2g(Y,Z)X-g(X,Y)Z-g(X,Z)Y \Big)\\
             &+3(H(p)-s)\Big(g(f Y,X)f Z+g(f Z,X)f Y\Big)\\
             &-2s \Big(g(hX, Z)h Y +2g(X,Z)h Y  +g(hX, Y)hZ \\
             &+2g( X, Y) hZ+2g(hX,Z)Y+2g(hX,Y)Z\\
             &-4g(hY,Z)X-4g(Y,Z)hX-2g(hY,Z)hX\\
             & -g(fhX, Y)fh Z -g(fhX, Z)fh Y+2 g(fhY, Z)fh X\Big).
 \end{aligned}
 \end{equation}

Replacing $X$ by $Y$ and $Y$ by $-X$ in \eqref{4R} we have:
 \begin{equation}\label{4RR}
 \begin{aligned}
  4\Big(R(X,Y)Z+R(Z,Y)X\Big) ={} &(H(p)+3s)\Big(-2g(X,Z)Y+g(X,Y)Z+g(Y,Z)X \Big)\\
             &+3(H(p)-s)\Big(-g(f X,Y)f Z-g(f Z,Y)f X\Big)\\
             &-2s \Big( -g(hY, Z)hX-2g( Y,Z) hX-g(hY, X)h Z  \\
             &-2g(X,Y)h Z-2g(hY,Z)X-2g(X,hY)Z\\
             &+4g(hX,Z)Y+4g(X,Z)hY+2g(hX,Z)hY\\
             &+g(fhY, X)fh Z +g(fhY, Z)fh X-2 g(fhX, Z)fh Y\Big).
 \end{aligned}
 \end{equation}

Summing \eqref{4R} and \eqref{4RR} and by using the Bianchi's first identity and the fact that $fh$ is a symmetric operator and $f$ is antisymmetric, we obtain:
 \begin{equation}\label{rr}
 \begin{aligned}
  4R(X,Y)Z ={} &(H(p)+3s)\Big(g(Y,Z)X-g(X,Z)Y \Big)+(H(p)-s)\Big(2g(f Y,X)f Z\\
               &+g(f Z,X)f Y+g(f Y,Z)f X\Big)-2s \Big(g(hX, Z)h Y +2g(X,Z)h Y  \\
             & +2g(hX, Z)Y -2g( hY, Z) X-2g(Y,Z)hX-g(hY,Z)hX \\
             &-g(fhX, Z)fh Y + g(fhY, Z)fh X\Big).
 \end{aligned}
 \end{equation}
At this point, again one can easily check, using (\ref{16}) and \eqref{h}, that the above equation is also valid for any $Z\in T_pM$ and $X,Y\in \mathcal{L}_p$. Now, we consider any $X,Y,Z\in T_pM$. We have that
 \begin{equation*}
  X=X^H+\summ \etaa(X)\xia, \; Y=Y^H+\summ \etaa(Y)\xia,
 \end{equation*}
where $X^H,Y^H\in \mathcal{L}_p$ and that:
 \begin{equation*}
  R(X,Y)Z=R(X^H,Y^H)Z+ \summ \etaa(X)R(\xia,Y)Z+\summ \etaa(Y)R(X^H,\xia)Z.
 \end{equation*}

Consequently, using \eqref{rr} and \eqref{1.12} in the above equation, we finally obtain \eqref{f}.

Next, we prove that the $f$-sectional curvature of $M$ is constant. Let $\{e_i\}$ be a local orthonormal basis of tangent vector fields on $U\subset M$. Then, taking $Y=Z=e_i$ in \eqref{f} and summing over $i$ we obtain the following identity for the Ricci operator at $p\in U$,
 \begin{equation}\label{Q}
   2Q=-\Big((n+1)H(p)+3s(n-1)+2s\kappa\Big) f^2 + 4\kappa n \,\overline{\eta}\otimes\overline{\xi}-2s\Big(2(1-n)-\mu\Big)h,
 \end{equation}
where we have used \eqref{f^2}, \eqref{eta xi}, \eqref{h}, the antisymmetry of $f$ and the fact that, since $h$ is traceless, then,
$$\text{Tr}h=\sum_{i=1}^{2n+s}g(h e_i,e_i)=0$$
and:
$$\text{Tr}fh=\sum_{i=1}^{2n+s}g(fh e_i,e_i)=0.$$

Comparing \eqref{Q} and \eqref{Ricci} we have that
 \begin{equation}\label{H}
  (n+1)H(p)=s(n-1-2\mu n-2\kappa)
 \end{equation}
and the $f$-sectional curvature is constant. By \cite[Theorem 2.3]{CT} we know that
\begin{equation*}
 K(X,fX)=-s(\kappa+\mu)\left(g(X,f^2X)\right)^2=-s(\kappa+\mu),
\end{equation*}
for any $X\in\mathcal{L}_+$ with $g(X,X)=1$. Thus, equation \eqref{H} becomes
\begin{equation*}
 -(n+1)(\kappa+\mu)=n-1-2\mu n-2\kappa,
\end{equation*}
namely $(\kappa-\mu+1)(n-1)=0$. Hence, since $n>1$, we have that $\mu=\kappa+1$ and $H=-s(2\kappa+1)$.
\end{proof}

We observe that the condition $n>1$ in the above theorem is necessary since, for $n=1$, there are examples of flat metric $f$-contact manifolds (see \cite[Example 6.2]{TKP}) and hence of $f$-$(\kappa,\mu)$-space-forms with $\mu=0\neq 1=\kappa+1$.

Moreover, in the above theorem, we have proved that $\mu=\kappa+1$ in the case of being $\kappa<1$, that is, if the $f$-$(\kappa,\mu)$-space-form is not an $S$-manifold. The converse is also true and we are going to prove that if $(M,f,\xi_1,\dots,\xi_s,\eta_1,\dots,\eta_s,g)$ ($n>1$) is a metric $f$-contact manifold satisfying the $(\kappa,\kappa+1)$-nullity condition and it is not an $S$-manifold, that is, $\kappa<1$, then it has constant $f$-sectional curvature.

To that end, firstly we state the following lemma which is proved by using \cite[Theorem 2.2]{CT}, \eqref{g(f,f)} and a long straightforward computation.
\begin{lemma} Let $M$ be an $f$-$(\kappa,\mu)$ manifold which is not an $S$-manifold. Let $X\in\mathcal{L}$ be a unit vector field and put $X=X_++X_-$, where $X_+\in\mathcal{L}_+$ and $X_-\in\mathcal{L}_-$. Then,
\begin{equation}\label{f-sect}
H(X)=-s(\kappa+\mu)+4s(\kappa-\mu+1)\Big(g(X_+,X_+)g(X_-,X_-)-g(X_+,fX_-)^2\Big),
\end{equation}
where $H(X)=K(X,fX)$ denotes the $f$-sectional curvature determined by a unit vector field $X\in\mathcal{L}$.
\end{lemma}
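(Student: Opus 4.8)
The plan is to compute $H(X)=K(X,fX)$ directly from its definition together with the explicit description of the curvature of an $f$-$(\kappa,\mu)$ manifold furnished by \cite[Theorem 2.2]{CT}. First I would fix the normalisation. Since $X\in\mathcal{L}$ and $g(X,X)=1$, equation \eqref{g(f,f)} gives $g(fX,fX)=g(X,X)=1$, while the antisymmetry of $f$ (the fact that $F(X,Y)=g(X,fY)$ is a $2$-form) forces $g(X,fX)=-g(X,fX)=0$. Hence $\{X,fX\}$ is an orthonormal pair spanning an $f$-section, and the $f$-sectional curvature reduces to the single term
$$H(X)=K(X,fX)=g(R(X,fX)fX,X).$$

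Next I would introduce the eigendecomposition. Writing $X=X_++X_-$ with $X_\pm\in\mathcal{L}_\pm$, the eigenvalue relations $hX_+=\lambda X_+$ and $hX_-=-\lambda X_-$ (with $\lambda=\sqrt{1-\kappa}$) hold, and since $f$ interchanges $\mathcal{L}_+$ and $\mathcal{L}_-$ we have $fX_+\in\mathcal{L}_-$ and $fX_-\in\mathcal{L}_+$; thus $fX=fX_-+fX_+$ splits with $\mathcal{L}_+$-part $fX_-$ and $\mathcal{L}_-$-part $fX_+$. I would then substitute these decompositions into $g(R(X,fX)fX,X)$ and expand multilinearly, producing a finite sum of terms $g(R(A,B)C,D)$ in which each of $A,B,C,D$ ranges over $\{X_+,X_-,fX_+,fX_-\}$.

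The core of the argument is to evaluate each such term by \cite[Theorem 2.2]{CT}, which expresses $R$ on vectors of $\mathcal{L}$ in terms of $g$, $f$ and the eigenvalues $\pm\lambda$, and which behaves differently according to whether its arguments lie in $\mathcal{L}_+$ or in $\mathcal{L}_-$; the bookkeeping must therefore respect this splitting. The simplifications are driven by three facts: the orthogonality $g(X_+,X_-)=0$ and $g(\mathcal{L}_+,\mathcal{L}_-)=0$; the identities $g(fX_\pm,fX_\pm)=g(X_\pm,X_\pm)$ and $g(X_+,fX_-)=-g(fX_+,X_-)$ coming from \eqref{g(f,f)} and the antisymmetry of $f$; and the eigenvalue relations for $h$. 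Collecting the surviving contributions, the terms that are insensitive to the mixing regroup into the constant $-s(\kappa+\mu)$, while the genuinely mixed terms coupling the $X_+$ and $X_-$ components assemble into the factor $g(X_+,X_+)g(X_-,X_-)-g(X_+,fX_-)^2$ with coefficient $4s(\kappa-\mu+1)$, yielding \eqref{f-sect}.

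The main obstacle is purely the length and the combinatorial bookkeeping of this expansion: there are many four-fold curvature terms, and one must apply the case-dependent formula of \cite[Theorem 2.2]{CT} consistently and track numerous cross terms without sign or coefficient errors. A useful sanity check at the end is the specialisation $X\in\mathcal{L}_+$ (equivalently $X_-=0$, or symmetrically $X\in\mathcal{L}_-$), for which the mixed factor vanishes and \eqref{f-sect} collapses to $H(X)=-s(\kappa+\mu)$, in agreement with the value of $K(X,fX)$ on $\mathcal{L}_+$ already invoked in the proof of the preceding theorem.
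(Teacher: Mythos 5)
Your proposal is correct and takes essentially the same route as the paper, whose own ``proof'' is only the remark that the lemma follows from \cite[Theorem 2.2]{CT}, equation \eqref{g(f,f)} and a long straightforward computation --- precisely the expansion of $g(R(X,fX)fX,X)$ along the decomposition $X=X_++X_-$ (with $fX_\pm\in\mathcal{L}_\mp$) that you set up. Your normalisation of the orthonormal pair $\{X,fX\}$ and the consistency check $H(X)=-s(\kappa+\mu)$ for $X\in\mathcal{L}_+$ agree with \cite[Theorem 2.3]{CT} as quoted in the paper.
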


Thus, we have:
\begin{theorem}\label{T33}
Let $(M,f,\xi_1,\dots,\xi_s,\eta_1,\dots,\eta_s,g)$ be a $(2n+s)$-dimensional $f$-$(\kappa,\mu)$ manifold with $n>1$ which is not an $S$-manifold. Then, $M$ is an $f$-$(\kappa,\mu)$-space form if and only if $\mu=\kappa+1$.
\end{theorem}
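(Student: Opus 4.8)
The plan is to prove the two implications separately, using the previous (Schur-type) theorem for the "only if" part and the preceding Lemma for the "if" part; given those two results in hand, both directions reduce to a short observation, so there is essentially nothing left to compute.

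First I would handle the forward implication. Suppose $M$ is an $f$-$(\kappa,\mu)$-space form, so that its $f$-sectional curvature is constant on $M$. In particular, at every point $p$ the $f$-sectional curvature is independent of the chosen $f$-section, which is exactly the hypothesis of the previous theorem. Since $M$ is not an $S$-manifold, the characterization of Cappelletti Montano--Di Terlizzi recalled in Section 2 (a metric $f$-contact manifold is an $S$-manifold if and only if $\kappa=1$) gives $\kappa<1$, and the final assertion of that theorem then forces $\mu=\kappa+1$ (together with $H=-s(2\kappa+1)$).

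For the converse I would invoke the Lemma. Assume $\mu=\kappa+1$; as $M$ is not an $S$-manifold we again have $\kappa<1$, so \eqref{f-sect} is available. The factor $\kappa-\mu+1$ appearing in \eqref{f-sect} vanishes identically under this hypothesis, so for every unit $X\in\mathcal{L}$ the formula collapses to $H(X)=-s(\kappa+\mu)=-s(2\kappa+1)$, a value manifestly independent of the decomposition $X=X_++X_-$ and hence of $X$. Thus the $f$-sectional curvature is constant on $M$, i.e. $M$ is an $f$-$(\kappa,\mu)$-space form, as required.

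I do not expect any genuine obstacle at the level of this statement: all the analytic content is already carried out in the Schur-type argument of the previous theorem and in the explicit $f$-sectional curvature formula \eqref{f-sect} of the Lemma. The only subtlety worth flagging is the trivial but essential remark that global constancy of the $f$-sectional curvature entails its pointwise constancy, which is precisely what licenses the appeal to the previous theorem in the forward direction.
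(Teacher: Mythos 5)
Your proposal is correct and follows exactly the paper's route: the forward direction is delegated to the Schur-type theorem (whose final assertion gives $\mu=\kappa+1$ and $H=-s(2\kappa+1)$ once $\kappa<1$, which holds since $M$ is not an $S$-manifold), and the converse is the one-line observation that under $\mu=\kappa+1$ the coefficient $\kappa-\mu+1$ in \eqref{f-sect} vanishes, so $H(X)=-s(\kappa+\mu)$ is constant. The paper's own proof consists of precisely these two steps, so there is nothing to add.
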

\begin{proof}
We only need to prove that if $\mu=\kappa+1$, then $M$ has constant $f$-sectional curvature. But, from \eqref{f-sect} we obtain $H(X)=-s(2k+1)$, for any unit vector field $X\in\mathcal{L}$.
\end{proof}
\section{Examples.}
\begin{ex}{\it Generalized $S$-space-forms.} {\rm A metric $f$-manifold with two structure vector fields
$$(M,f,\xi_1,\xi_2,\eta_1,\eta_2,g),$$
is said to be a {\it generalized $S$-space form} if there exist seven differentiable functions on $M$, $F_1,\dots,F_7$ such that the curvature tensor field $R$ of $M$ satisfies
\begin{equation}\label{gssf}
\begin{split}
R(X,Y)Z=&F_1\left\{g(Y,Z)X-g(X,Z)Y\right\}\\
&+F_2\left\{g(X,fZ)fY-g(Y,fZ)fX+2g(X,fY)fZ\right\}\\
&+F_3\left\{\eta_1(X)\eta_1(Z)Y-\eta_1(Y)\eta_1(Z)X+g(X,Z)\eta_1(Y)\xi_1-g(Y,Z)\eta_1(X)\xi_1\right\}\\
&+F_4\left\{\eta_2(X)\eta_2(Z)Y-\eta_2(Y)\eta_2(Z)X+g(X,Z)\eta_2(Y)\xi_2-g(Y,Z)\eta_2(X)\xi_2\right\}\\
&+F_5\left\{\eta_1(X)\eta_2(Z)Y-\eta_1(Y)\eta_2(Z)X+g(X,Z)\eta_1(Y)\xi_2-g(Y,Z)\eta_1(X)\xi_2\right\}\\
&+F_6\left\{\eta_2(X)\eta_1(Z)Y-\eta_2(Y)\eta_1(Z)X+g(X,Z)\eta_2(Y)\xi_1-g(Y,Z)\eta_2(X)\xi_1\right\}\\
&+F_7\left\{\eta_1(X)\eta_2(Y)\eta_2(Z)\xi_1-\eta_2(X)\eta_1(Y)\eta_2(Z)\xi_1\right.\\
&+\left.\eta_2(X)\eta_1(Y)\eta_1(Z)\xi_2-\eta_1(X)\eta_2(Y)\eta_1(Z)\xi_2\right\},
\end{split}
\end{equation}
for any $X,Y,Z\in\mathcal{X}(M)$ \cite{CFF2,T}. Then, if $M$ is also a metric $f$-contact manifold, a direct expansion from \eqref{gssf} shows that $M$ is a $f$-$(\kappa,\mu)$ manifold if the functions $F_1-F_3,F_5,F_6,F_4-F_7$ are constant functions and $F_1-F_3=-F_5=-F_6=F_4-F_7$. In such a case, $\kappa=F_1-F_3$ and $\mu=0$. Some examples of generalized $S$-space-forms satisfying these conditions can be found in \cite{CFF2}.

Consequently, from Theorem \ref{T33} and \cite[Theorem 4.3]{CFF2}, we obtain:
\begin{theorem} A generalized $S$-space form with two structure vector fields verifying the $(\kappa,\mu)$-nullity condition is a $f$-$(\kappa,\mu)$-space-form if and only if it is either an $S$-space-form or $\kappa=-1, \mu=0$.
\end{theorem}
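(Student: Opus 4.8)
The plan is to deduce the statement from the two theorems already established in Section~3 together with the structural observation made just above it. The crucial first remark is that, by the direct expansion carried out in the Example, a generalized $S$-space-form with two structure vector fields that verifies the $(\kappa,\mu)$-nullity condition automatically has $\mu=0$ and $\kappa=F_1-F_3$. Hence throughout the argument we may assume $\mu=0$, so that the only free parameter is $\kappa$, which by the results recalled in Section~2 satisfies $\kappa\leq 1$, with $\kappa=1$ characterizing exactly the $S$-manifolds. The whole proof then reduces to tracking the single scalar $\kappa$.

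For the direct implication, suppose $M$ is an $f$-$(\kappa,\mu)$-space-form. I would distinguish the two cases $\kappa<1$ and $\kappa=1$. If $\kappa<1$, then $M$ is not an $S$-manifold, so Theorem~\ref{T33} applies and yields $\mu=\kappa+1$; combining this with $\mu=0$ forces $\kappa=-1$, which is precisely the second alternative. If instead $\kappa=1$, then $M$ is an $S$-manifold, and I would invoke \cite[Theorem 4.3]{CFF2} to upgrade ``generalized $S$-space-form which is an $S$-manifold'' to ``$S$-space-form'', giving the first alternative.

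For the converse, there are again two cases matching the two alternatives. If $M$ is an $S$-space-form, then it is an $S$-manifold of constant $f$-sectional curvature, hence an $f$-$(\kappa,\mu)$-space-form by the $\kappa=1$ part of Theorem~3.1 already known from \cite[Proposition 1.5]{Kobayashi-Tsuchiya} and \cite[Theorem 2.1]{Fernandez} (recall that for $\kappa=1$ one has $h=0$, so the value of $\mu$ is immaterial). If instead $\kappa=-1$ and $\mu=0$, then the relation $\mu=\kappa+1$ holds and $\kappa=-1<1$ shows that $M$ is not an $S$-manifold, so Theorem~\ref{T33} gives at once that $M$ is an $f$-$(\kappa,\mu)$-space-form.

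I expect the only step needing genuine care to be the $\kappa=1$ branch of the direct implication, namely the passage from ``$M$ is a generalized $S$-space-form which is an $S$-manifold'' to ``$M$ is an $S$-space-form''. This is exactly where the cited classification \cite[Theorem 4.3]{CFF2} enters, and making the two notions coincide is the one point that does not follow formally from the reduction $\mu=0$ and from Theorem~\ref{T33}; everything else is bookkeeping relating the sign of $\kappa-1$ to the equation $\mu=\kappa+1$.
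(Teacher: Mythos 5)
Your proposal is correct and follows essentially the same route as the paper, which derives this theorem directly from Theorem~\ref{T33} together with \cite[Theorem 4.3]{CFF2} after observing that the $(\kappa,\mu)$-nullity condition forces $\mu=0$ for such generalized $S$-space-forms; your case split on $\kappa<1$ versus $\kappa=1$ simply makes explicit the bookkeeping the paper leaves implicit.
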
}
\end{ex}
\begin{ex} {\it Trans-$S$-manifolds.} {\rm A $(2n+s)$-dimensional metric $f$-manifold $M$ is said to be an {\it almost trans-$S$-manifold} if it satisfies \begin{equation}\label{nabla}
\begin{split}
(\nabla_Xf)Y=\sum_{i=1}^s&\Big[\alpha_i\{g(fX,fY)\xi_i+\eta_i(Y)f^2X\}\\
&+\beta_i\{g(fX,Y)\xi_i-\eta_i(Y)fX\}\Big],
\end{split}
\end{equation}
for certain smooth functions (called the {\it characteristic functions}) $\alpha_i,\beta_i$, $i=1,\dots,s$, on $M$ and any $X,Y\in\mathcal{X}(M)$. If, moreover, $M$ is normal, then it is said to be a {\it trans-$S$-manifold}.

On the other hand, for the curvature tensor field $R$ of a metric $f$-$K$-contact manifold it is known that \cite{CFF}
\begin{equation}\label{t421}
R(X,\xi_\a)Y=-(\n_Xf)Y,
\end{equation}
for any $X,Y\in\mathcal{X}(M)$ and $\a=1,\dots ,s$.

Consequently, from \eqref{nabla} and \eqref{t421}, we deduce that a metric $f$-$K$-contact manifold satisfies the $(\kappa,\mu)$-nullity condition if and only if $\alpha_1=\cdots=\alpha_s=\kappa$ (and so, all the characteristic functions $\alpha_i$, $i=1,\dots,s$ are constant functions) and $\beta_1=\cdots=\beta_s=0$, for any $\mu$.

Thus, from Theorem \ref{T33} and \cite[Corollary 3.2]{AFP} we can prove:
\begin{theorem} Let $M$ be a trans-$S$-manifold, whit characteristic functions $\alpha_i,\beta_i$, $i=1,\dots,s$,  which is also a metric $f$-$K$-contact manifold. Then, $M$ is an $f$-$(\kappa,\mu)$-space-form if and only if it is either an $S$-space-form or $\alpha_1=\cdots=\alpha_s=\kappa$, $\beta_1=\cdots=\beta_s=0$ and $\mu=\kappa+1$.
\end{theorem}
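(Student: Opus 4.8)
The plan is to reduce everything to the equivalence recorded in the paragraph immediately preceding the statement, and then feed the two halves of the resulting dichotomy into Theorem \ref{T33} and \cite[Corollary 3.2]{AFP}. The backbone is this: since $M$ is a metric $f$-$K$-contact trans-$S$-manifold, combining the trans-$S$ identity \eqref{nabla} with the curvature formula \eqref{t421} shows that $M$ verifies the $(\kappa,\mu)$-nullity condition \eqref{16} if and only if $\alpha_1=\cdots=\alpha_s=\kappa$ is a constant and $\beta_1=\cdots=\beta_s=0$, the value of $\mu$ being free. This converts the analytic hypothesis ``$M$ is an $f$-$(\kappa,\mu)$ manifold'' into the purely pointwise conditions on the characteristic functions, and it is what both implications rest on. I would also invoke the result of \cite{CT} quoted in Section 2, namely that a metric $f$-contact manifold is an $S$-manifold exactly when $\kappa=1$, so that the split ``$S$-manifold versus not an $S$-manifold'' is literally the split ``$\kappa=1$ versus $\kappa<1$''.

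For the forward implication I would assume $M$ is an $f$-$(\kappa,\mu)$-space-form. In particular $M$ is then an $f$-$(\kappa,\mu)$ manifold, so the reduction forces $\alpha_1=\cdots=\alpha_s=\kappa$ and $\beta_1=\cdots=\beta_s=0$. If $\kappa=1$, then $M$ is an $S$-manifold of constant $f$-sectional curvature, that is, an $S$-space-form, which is the first alternative. If $\kappa<1$, then $M$ is not an $S$-manifold, so Theorem \ref{T33} applies (its hypothesis $n>1$ being in force) and yields $\mu=\kappa+1$, which together with $\alpha_i=\kappa$, $\beta_i=0$ is the second alternative.

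For the converse I would argue the two alternatives separately. If $M$ is an $S$-space-form, it is by definition an $S$-manifold ($\kappa=1$) of constant $f$-sectional curvature, hence immediately an $f$-$(1,\mu)$-space-form. If instead $\alpha_1=\cdots=\alpha_s=\kappa$, $\beta_1=\cdots=\beta_s=0$ and $\mu=\kappa+1$ in the non-$S$ regime $\kappa<1$, then the reduction makes $M$ an $f$-$(\kappa,\kappa+1)$ manifold which is not an $S$-manifold, and Theorem \ref{T33} (again using $n>1$) gives that $M$ has constant $f$-sectional curvature, so $M$ is an $f$-$(\kappa,\mu)$-space-form. I expect \cite[Corollary 3.2]{AFP} to enter precisely at the $\kappa=1$ branch, supplying the characterization of $S$-space-forms inside the trans-$S$ class (the analogue, here, of the role played by \cite[Theorem 4.3]{CFF2} in the generalized $S$-space-form theorem).

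The main obstacle I anticipate is not computational but the correct bookkeeping at the boundary $\kappa=1$. One must ensure the two listed alternatives genuinely partition the possibilities: the branch $\mu=\kappa+1$ belongs to the non-$S$ regime, where $h\neq0$ makes the constant $\mu$ meaningful, while the $S$-space-form branch absorbs the case $\kappa=1$, where $h=0$ renders $\mu$ irrelevant and the only content of ``$f$-$(\kappa,\mu)$-space-form'' is constancy of the $f$-sectional curvature. Making this dichotomy clean, and verifying that the hypotheses of Theorem \ref{T33} (namely $n>1$ and ``not an $S$-manifold'') are exactly met in the $\kappa<1$ case, is the only delicate point; the rest follows formally from the reduction and the two cited results.
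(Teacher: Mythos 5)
Your proposal takes essentially the same route as the paper: the paper's proof consists precisely of the reduction, via \eqref{nabla} and \eqref{t421}, of the $(\kappa,\mu)$-nullity condition to $\alpha_1=\cdots=\alpha_s=\kappa$ constant and $\beta_1=\cdots=\beta_s=0$ with $\mu$ free, combined with Theorem \ref{T33} on the non-$S$-manifold branch and \cite[Corollary 3.2]{AFP} on the $S$-manifold branch. Your case split at $\kappa=1$ versus $\kappa<1$, and the observation that $h=0$ makes $\mu$ irrelevant in the $S$-manifold case, correctly fleshes out the paper's (largely implicit) argument.
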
}
\end{ex}
\begin{ex}{\rm Given a metric $f$-manifold $$(M,f,\xi_1,\dots,\xi_s,\eta_1,\dots,\eta_s,g),$$
if we consider a {\it $D$-homothetic deformation of constant $a>0$}
\begin{equation*}\label{dconf}
\widetilde f=f, \quad \widetilde\xi_\alpha=\frac{1}{a}\xi_\alpha, \quad \widetilde\eta_\alpha=a\eta_\alpha, \quad
\widetilde g= ag + a(a-1)\sum_{\alpha=1}^s\eta_\alpha\otimes\eta_\alpha,
\end{equation*}
for any $\alpha=1,\dots,s$, then, it is easy to prove that $(M,\widetilde f,\widetilde\xi_1,\dots,\widetilde\xi_s,\widetilde\eta_1,\dots,\widetilde\eta_s,\widetilde g)$ is also a metric $f$-manifold.

Performing a $D$-homothetic deformation of constant $a>0$ to a $(2n+s)$-dimensional ($n>1$) metric $f$-contact manifold $M$ satisfying $R(X,Y)\xi_\alpha=0$ for any $X,Y\in\mathcal{X}(M)$ and $\alpha=1,\dots,s$ (this manifold is locally isometric to $E^{n+s}\times S^n$, \cite[Theorem 2.1]{T}), we obtain on $M$ a metric $f$-$(\kappa,\mu)$-structure with $\kappa=(a^2-1)/a^2$ and $\mu=2(a-1)/a$. Thus, the condition $\mu=\kappa+1$ is equivalent to $a=1/2$. Then, from Theorem \ref{T33}, for $a=1/2$, $M$ is an $f$-$(\kappa,\mu)$-space-form.

In case $n=1$, from \cite[Theorem 2.2]{DT}, the condition $R(X,Y)\xi_\alpha=0$ implies that $M^{2+s}$ is flat and so, these manifolds are $f$-$(\kappa,\mu)$-space-forms with $\kappa=\mu=0$. Performing a $D$-homothetic deformation of such manifolds with constant $a\neq 1/2$, $a\neq 1$, we obtain examples of $f$-$(\kappa,\mu)$-space-forms with non-zero $f$-sectional curvature and $\mu\neq\kappa+1$. In fact, for the deformed manifold, $\kappa=(a^2-1)/a^2$, $\mu=2(a-1)/a$ and, from \cite[Theorem 2.3]{CT}, $H=-s(\kappa+\mu)=-s(3a^2-2a-1)/a^2$.
}\end{ex}

\end{document}